\newtheorem{lemma}{Lemma}
\newtheorem{theorem}{Theorem}
\newtheorem{example}{Example}
\title{Regularization for the Approximation of 2D Set of Points via the Length of the Curve}
\author[1]{M.~E. Abbasov\thanks{m.abbasov@spbu.ru}}
\author[1]{A.~I. Belenok}
\affil[1]{St. Petersburg State University, SPbSU, 7/9 Universitetskaya nab., St. Petersburg, 199034 Russia}
\begin{document}

\maketitle

\begin{abstract}
We study the problem of approximation of 2D set of points. Such type of problems always occur in physical experiments, econometrics, data analysis and other areas. The often problems of outliers or spikes usually make researchers to apply regularization techniques, such as Lasso, Ridge or Elastic Net. These approaches always employ penalty coefficient. So the important question of evaluation of the upper bound for the coefficient arises. In the current study we propose a novel way of regularization and derive the upper bound for the used penalty coefficient.

First the problem in a general form is stated. The solution is sought in the class of piecewise continuously differentiable functions. It is shown that the optimal solution belongs to the class of piecewise linear functions. So the problem of obtaining the piecewise linear approximation that fits 2D set of point the best is stated. We show that the optimal solution is trivial and tends to a line as penalty coefficient tends to infinity. Then the main result is stated and proved. It provides the upper bound for the penalty coefficient prior to which the optimal solution differs from the line more than some pregiven positive number. We also demonstrate the proposed ideas on numerical examples which include comparison with other regularization approaches.
\end{abstract}

\subsection*{Keywords}
Regression; regularization; piecewise linear approximation; Ridge and Lasso regularization

\section{Introduction}

Regularization is actively used in statistics and machine learning to prevent overfitting and combat multicollinearity. In the case of multicollinearity, the coefficients, which are not limited by anything in the classical least squares method, can diverge either to plus or minus infinity, which negatively affects the accuracy of the model. To eliminate such problems, researchers often use Ridge regression \cite{AM_BA_Ali95,AM_BA_Hoerl70,AM_BA_ Hoerl20,AM_BA_ Hoerl75,AM_BA_Hoerl85,AM_BA_Hoerl86} or Lasso regression \cite{AM_BA_Bow76,
AM_BA_Chen98,AM_BA_Tibshirani12,AM_BA_Zou07,AM_BA_Lee16,AM_BA_Lockhart14}. In the case of Lasso regression, instead of the residual sum of squares (RSS), it is proposed to minimize the expression $RSS+\alpha \sum^n_{i=1}{|{\beta }_i|}$, where $\beta_i$, $i=1,\dots,n$ are coefficients of the model and $\alpha$ is a scalar penalty parameter. In case of Ridge regression the expression $RSS+\alpha \sum^n_{i=1}{{{\beta }_i}^2}$ is minimized.
These approaches are usually used to treat bias-variance tradeoff. The main idea here is to decrease the variance of the model at the cost of increasing its bias. This way one can mitigate negative effects such as overfitting of the model or unwanted spikes of the obtained approximation.

RIDGE and Lasso regressions produce similar results for small values of the penalty coefficient $\alpha$, but it is interesting that Lasso regression, unlike its smooth counterpart, not only performs regularization, but also resets some coefficients to zero for a sufficiently large penalty value. That is, the Lasso approach reduces the number of variables, which makes it easier to interpret the model \cite{AM_BA_BGP02,AM_BA_Con96}. Another significant difference between these models is that $l_1$-regularization, which is employed by Lasso regression, leads to the a slower reduction of penalty $\alpha \sum^n_{i=1}{|{\beta }_i|}\ $. This is important for the most valuable variables in the model, and therefore significant for obtaining more accurate forecasts \cite{AM_BA_Coo03}.

RIDGE regression shows higher sensitivity to outliers than its Lasso mate. 
While the Lasso regression has some advantages over Ridge regression, Lasso regression also has a number of disadvantages: the Lasso estimate does not guarantee uniqueness in all situations \cite{AM_BA_Ell98}, the process of adjusting the $\alpha$ parameter tends to give unstable results \cite{AM_BA_Fle80}, in addition, it has been empirically shown that Lasso performs worse in a situation where the set of true values contains many small, but not zero, observations \cite{AM_BA_FGK03}.

To eliminate some of the problems associated with Lasso and Ridge regression, Elastic Net regularization was proposed, which is a hybrid of the methods discussed above: in this case, the expression $RSS+{\alpha }_1\sum^n_{i=1}{{\beta }_i}^2+\alpha_2\sum^n_{i=1}{|{\beta }_i|}$ is minimized \cite{AM_BA_GMW81}.

Despite the large number of works on this topic in practical application researchers often face the problem of choosing the value of the penalty coefficient. At the moment, the most commonly used technique for these purposes is cross-validation, which helps to find $\alpha $ from available observations \cite{AM_BA_GHGsoft}. The cross-validation procedure artificially divides observation data into test and training samples. Estimates of regression coefficients are based on the training sample. Then the resulting regression model is tested on the test sample. It is obvious that it is possible to obtain different penalty coefficients, depending on the method of dividing the original sample.

This article considers piecewise linear approximations of a set of two-dimensional points. A regularization method based on the length of the broken line is proposed. An upper estimate for the penalty coefficient is obtained.

The paper is organized as follows. A novel idea of regularization as well as its preliminary analysis, which is important for the rest of the research are given in Section 2. In Section 3 we present auxiliary lemmas. They are used to derive the main result regarding the upper bound of the penalty coefficient. Some numerical examples demonstrating obtained results and including comparison with the other regularization techniques are provided in Section 4.

\section{The statement of the problem}\label{AM_BA_subsect_statement}

Consider a set of points $$X=\{(x_i,y_i)\mid i=1,\dots,n\}$$ such that $x_i\neq x_j$ if $i\neq j$. In order to get an approximation of the set via a curve $y(x)$ defined by some parameters one can employ standard regression technique by minimizing $$RSS=\sum^n_{i=1}{\left(y_i-y\left(x_i\right)\right)}^2$$
subject to these parameters. The resulting curve can have spikes, say due to outliers in $X$. In order to overcome this we propose to minimize the following functional
$$J_\alpha\left(y\right)=\sum^n_{i=1}{{\left(y_i-y\left(x_i\right)\right)}^2+\alpha \int^l_0{\sqrt{1+y^{'2}\left(x\right)}\ dx}\ },$$
which is the sum of $RSS$ and the lengths of the approximation curve multiplied by penalty coefficient $\alpha$. The idea of introduction of second term is plain: spikes produce segments of the curve with high length, therefore taking into account the whole length of the curve with some multiplier we can penalize curve for spikes. The impact of penalization can be adjusted by selecting $\alpha$. Obviously, the higher $\alpha$ the more we are focusing efforts on minimizing the lengths of $y(x)$ and as $\alpha$ tends to infinity $y(x)$ tends to a line. Therefore, natural question occurs: what is the upper bound for $\alpha$ for which the problem still makes sense? We need to clarify some details before the dive into the study of the mentioned questions.

Assume that $J_\alpha\left(y\right)$ attains minimum on a function $\widetilde{y}(x)$. Consider a segment $\left[x_{i-1};\ x_i\right]$. 
The section from $x_{i-1}$ to $x_i$ of the function $\widetilde{y}(x)$ must be rectilinear, otherwise by replacing this section with a line segment connecting the points $(x_i,\ {\widetilde{y}(x}_i))$ and $(x_{i-1},\ {\widetilde{y}(x}_{i-1}))$ we reduce the value of the functional.
This implies that the function which delivers the minimum to the functional is a broken line with vertices in points $x_i$, $i=1,\dots,k$. Therefore, in what follows we work only within the class of piecewise linear approximations.

Let us find the broken line which provides the best approximation of the set of points $X$ in the sense of minimization of  functional $J_{\alpha}$. Passing to broken lines, the original functional takes the form $$J_{\alpha}\left(a\right)=\sum^n_{i=1}{{\left(y_i-a_i\right)}^2+\alpha \sum^n_{i =2}{\sqrt{{(a_i-a_{i-1})}^2+{(x_i-x_{i-1})}^2}}\ },$$ where $a=(a_1,\dots,a_n)^T$ and $a_i=y\left( x_i\right)$, $i=1,\dots,n$ are ordinates of vertices of the broken line. We need to determine $a$ at which $J_{\alpha}$ reaches a minimum. Let us use the necessary condition for a minimum:
\begin{equation}\label{AM_BA_Ja_1}
\begin{split}
\frac{{\partial J}_{\alpha }}{\partial a_i}=&2\left(y_i - a_i\right)+\alpha \Bigg[ \frac{a_i-a_{i-1}} {\sqrt{{\left(a_i-a_{i-1}\right)}^2+{\left(x_i-x_{i-1}\right)}^2}}\\ &-\frac{a_{i +1}-a_i}{\sqrt{{\left(a_{i+1}-a_i\right)}^2+{\left(x_{i+1}-x_i\right)}^2}}\Bigg]=0, \quad \forall i=2,\dots,n-1.
\end{split}
\end{equation}
For $i=1$ and $i=n$ the expressions are the same except the fact that the first or the second term in the square brackets respectively should be omitted.

It is clear from the resulting expression that for small $\alpha$ we obtain a solution which is close to $a_i=y_i$, $i=1,\dots, n,$ that is, the optimal broken line is close to the broken line with vertices $\left(x_i,\ y_i\right)$, $i=1,\dots,n$. For large values of $\alpha$, the first term in (\ref{AM_BA_Ja_1}) becomes negligible and the solution is close to a broken line, the links of which have a close slope, that is, the solution differs little from the straight line, which, as is easy to see based on the form of the functional, is parallel to axis $Ox$. Let us find out what this line is.

\section{Deriving an upper bound for the penalty coefficient}\label{AM_BA_subsect_up_bound}

The following result defines a line to which an optimal broken line tends as penalty coefficient $\alpha$ tends to infinity.

\begin{lemma}\label{AM_BA_lem1}
Let $$\Omega =\left\{a={{\left(k_1x_1+k_2,\dots,k_1x_n+k_2\right)}}\mid k_1, k_2\in \mathbb{R}\right\} .$$ Then ${\displaystyle\underset{a\in \Omega}{\operatorname{argmin}} {J_{\alpha }}(a)=\overline{a}\ },$ where $\overline{a}=\left(\overline{a}_1,\dots,\overline{a}_n\right)$, $$\overline{a}_i=\frac{1}{n}\sum^n_{j =1}{y_j},\quad\forall i=1,\dots,n.$$ In other words, on the class of lines $J_{\alpha }\left(a\right)$ reaches a minimum on a straight line parallel to the $Ox$ axis. So, this is the average line for the ordinates of all points of set $X$.
\end{lemma}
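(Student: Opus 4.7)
The plan is to parametrize $\Omega$ by the two scalars $k_1, k_2$, substitute $a_i = k_1 x_i + k_2$ into the piecewise-linear form of $J_\alpha(a)$ given in Section 2, and then carry out an ordinary two-variable minimization. The key simplification is that, with the $x_i$ indexed in increasing order, each chord length collapses as
\[
\sqrt{(a_i - a_{i-1})^2 + (x_i - x_{i-1})^2} = \sqrt{1 + k_1^2}\,(x_i - x_{i-1}),
\]
so the length-penalty sum telescopes to $\alpha\sqrt{1+k_1^2}\,(x_n - x_1)$ and is independent of $k_2$. Thus on $\Omega$
\[
J_\alpha(k_1, k_2) = \sum_{i=1}^{n}(y_i - k_1 x_i - k_2)^2 + \alpha\sqrt{1+k_1^2}\,(x_n - x_1).
\]

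Next I would apply the first-order optimality conditions $\partial J_\alpha/\partial k_2 = 0$ and $\partial J_\alpha/\partial k_1 = 0$. The $k_2$-equation is the classical mean relation $\sum_i(y_i - k_1 x_i - k_2) = 0$, yielding $k_2 = \bar{y} - k_1 \bar{x}$ with $\bar{y} = \tfrac{1}{n}\sum y_j$ and $\bar{x} = \tfrac{1}{n}\sum x_j$. The $k_1$-equation couples the empirical slope condition coming from the RSS term with the nonlinear penalty contribution $\alpha k_1(x_n - x_1)/\sqrt{1+k_1^2}$; substituting the previous relation into it and solving should force $k_1 = 0$, and hence $k_2 = \bar{y}$. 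This gives $\overline{a}_i = \bar{y}$ for every $i$, as claimed.

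To close the argument I would confirm that this stationary point is the global minimizer rather than a saddle. Since on $\Omega$ the objective is the sum of a convex quadratic in $(k_1, k_2)$ and the convex function $k_1 \mapsto \alpha (x_n - x_1)\sqrt{1+k_1^2}$, the restricted $J_\alpha$ is convex, so any critical point is a unique global minimum. I expect the delicate step to be the treatment of the $k_1$-equation: because of the nonlinearity of $\sqrt{1+k_1^2}$ one must verify that the slope contribution from the RSS part cannot be balanced by the penalty term except at $k_1 = 0$, which is precisely where the geometric intuition (length is strictly minimized on horizontal lines) meets the stationarity requirement.
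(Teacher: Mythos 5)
Your reduction is exactly the paper's: substitute $a_i=k_1x_i+k_2$, collapse the length penalty to $\alpha(x_n-x_1)\sqrt{1+k_1^2}$, and impose the first-order conditions in $(k_1,k_2)$. Your added observation that the restricted objective is convex (indeed strictly convex, since the $x_i$ are pairwise distinct), so that any critical point is automatically the unique global minimizer, is a genuine strengthening that the paper's proof does not contain.

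However, the step you defer --- ``solving should force $k_1=0$'' --- is precisely where the argument does not go through as promised. Eliminating $k_2=\bar{y}-k_1\bar{x}$, the $k_1$-equation becomes
\begin{equation*}
2k_1\sum_{i=1}^{n}(x_i-\bar{x})^2-2\sum_{i=1}^{n}(x_i-\bar{x})(y_i-\bar{y})+\alpha(x_n-x_1)\frac{k_1}{\sqrt{1+k_1^2}}=0 .
\end{equation*}
At $k_1=0$ the penalty contribution vanishes (its derivative $\alpha(x_n-x_1)k_1/\sqrt{1+k_1^2}$ is zero there), so stationarity at $k_1=0$ requires $\sum_i(x_i-\bar{x})(y_i-\bar{y})=0$, i.e.\ zero sample covariance between the abscissas and ordinates --- a condition on the data, not something the optimization forces. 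When this covariance is nonzero, the unique minimizer over $\Omega$ has a nonzero slope (which only tends to $0$ as $\alpha\to\infty$), so the identity $\operatorname{argmin}_{a\in\Omega}J_\alpha(a)=\overline{a}$ cannot be obtained by your plan without an extra hypothesis or an asymptotic reformulation. To be fair, the paper's own proof asserts that the stationarity system ``has the solution $k_1=0$, $k_2=\frac{1}{n}\sum_j y_j$'' with exactly the same lack of verification, so your sketch is no weaker than the published argument --- but the gap is real: it cannot be closed by ``just solving,'' and your convexity remark actually makes this visible, since the strictly convex restricted problem has a unique critical point, which coincides with the horizontal mean line only in the uncorrelated case or in the limit $\alpha\to\infty$.
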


\begin{proof}

The functional $J_{\alpha }$ on $\Omega$ gets the form
$$J_{\alpha }\left(k_1,\ k_2\right)=\sum^n_{i=1}{{(k_1x_i+k_2-y_i)}^2+\alpha \sum^{n-1} _{i=1}{\sqrt{{\left(k_1x_{i+1}-k_1x_i\right)}^2+{\left(x_{i+1}-x_i\right)}^2}}},$$
whence
$$J_{\alpha }\left(k_1,\ k_2\right)=\sum^n_{i=1}{{(k_1x_i+k_2-y_i)}^2+\alpha (x_n-x_1)\sqrt{1+{k_1}^2}}.$$

Employing necessary condition for the minimum we get the system of two equations
 
\begin{equation}
\begin{cases}
\displaystyle\frac{{\partial J}_{\alpha }\left(k_1,\ k_2\right)}{\partial k_1}=2\sum^n_{i=1}{\left(k_1x_i+k_2-y_i\right)x_i+\alpha \left(x_n-x_1\right)\frac{k_1}{\sqrt{1+{k_1}^2}}=0} \\
\displaystyle\frac{{\partial J}_{\alpha }\left(k_1,\ k_2\right)}{\partial k_2}=2\sum^n_{i=1}{\left(k_1x_i+k_2-y_i\right)=0} 
\end{cases}
\end{equation}
which has the solution $k_1=0$, $k_2=\displaystyle\frac{1}{n}\sum^n_{j=1}{y_j}$.

\end{proof}

In what follows we will also need the following auxiliary result too.

\begin{lemma}\label{AM_BA_lem2}
The coordinates of the vector $a$ on which the minimum of the functional $J_{\alpha }\left(a\right)$ is acttained belongs to the segment 
\begin{equation}\label{AM_BA_lem2_cond}\displaystyle\left[{\min_{i=1,\dots,n}y_i,\max_{i=1,\dots,n}y_i}\right],\end{equation} that is 
$$
\min_{i=1,\dots,n}y_i\leq a_i\le \max_{i=1,\dots,n}y_i ,\quad i=1,\dots,n.
$$
\end{lemma}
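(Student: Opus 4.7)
The plan is to argue by contradiction using a ``clipping'' construction. Let $m=\min_{j}y_j$ and $M=\max_{j}y_j$, suppose $a=(a_1,\dots,a_n)$ minimizes $J_\alpha$, and assume for contradiction that some coordinate, say $a_k$, lies outside $[m,M]$. I will construct a competitor $\tilde a$ by truncating each coordinate to the interval $[m,M]$, i.e.\ $\tilde a_i=\min\{\max\{a_i,m\},M\}$, and show that $J_\alpha(\tilde a)<J_\alpha(a)$.

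First, I would analyze the residual term. Since every $y_i$ itself lies in $[m,M]$, projecting $a_i$ onto $[m,M]$ can only bring it closer to $y_i$. More precisely, $(y_i-\tilde a_i)^2\le(y_i-a_i)^2$ for every $i$, and the inequality is strict at $i=k$: if $a_k>M\ge y_k$, then $a_k-\tilde a_k=a_k-M>0$, so
$$(y_k-a_k)^2-(y_k-\tilde a_k)^2=(a_k-M)(a_k+M-2y_k)>0,$$
and symmetrically if $a_k<m$. Hence $\sum_i(y_i-\tilde a_i)^2<\sum_i(y_i-a_i)^2$.

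Next, I would handle the length term using the elementary fact that the truncation map $t\mapsto\min\{\max\{t,m\},M\}$ is $1$-Lipschitz, so $|\tilde a_i-\tilde a_{i-1}|\le|a_i-a_{i-1}|$ for every $i=2,\dots,n$. Since $x_i-x_{i-1}$ is unchanged, this implies
$$\sqrt{(\tilde a_i-\tilde a_{i-1})^2+(x_i-x_{i-1})^2}\le\sqrt{(a_i-a_{i-1})^2+(x_i-x_{i-1})^2}$$
for every $i$, so the length term does not increase either.

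Combining the two estimates yields $J_\alpha(\tilde a)<J_\alpha(a)$, contradicting the optimality of $a$. Therefore every coordinate of any minimizer must lie in $[m,M]$, which is exactly \eqref{AM_BA_lem2_cond}. I do not foresee a real obstacle here; the only small care needed is to verify the strict decrease of the RSS term at the offending index $k$ (so that the non-strict monotonicity of the length term is still enough to close the argument), and to note the $1$-Lipschitz property of the truncation so that the length term is controlled coordinate-by-coordinate.
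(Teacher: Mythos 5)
Your proof is correct, and it rests on the same basic idea as the paper's: argue by contradiction and show that moving an out-of-range ordinate back into $[\min_i y_i,\max_i y_i]$ strictly decreases the residual term while not increasing the length term. The implementation differs in a useful way, though. The paper modifies only the single offending coordinate $a_k$, replacing it by $\max_i y_i$, and to make the adjacent-link estimates $a_k-a_{k\pm1}>\max_i y_i-a_{k\pm1}\ge 0$ work it assumes ``without loss of generality'' that all the other coordinates already lie in the segment — a step that is not literally a loss-of-generality reduction and would need repair (e.g.\ choosing $k$ extremal, or iterating) if several coordinates violate the bound. Your global clipping $\tilde a_i=\min\{\max\{a_i,m\},M\}$ sidesteps this entirely: the $1$-Lipschitz property of truncation controls every link length at once, the projection onto $[m,M]$ can only shrink each residual since $y_i\in[m,M]$, and the strict decrease at the offending index $k$ (via $(a_k-M)(a_k+M-2y_k)>0$ when $a_k>M\ge y_k$, and symmetrically below $m$) closes the argument. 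So your version is a slightly more robust form of the same competitor construction, and it proves the statement without the paper's auxiliary assumption.
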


\begin{proof}
Assume the contrary. Let there exists $k\in\left[1,\dots ,n\right]$ such that $$a_k>\max_{i=1,\dots,n}y_i.$$ Without loss of generality we can assume that $a_i$, $i=1,\dots,k-1,k+1,\dots,n$ belong to the segment \eqref{AM_BA_lem2_cond}.

In point $x_k$ the square of the deviation is equal to $(a_k-y_k)^2,$ and the links of the broken line having the point $(x_k, a_k)$ as an endpoint contribute the value $$\alpha\left(\sqrt{{\left(a_k-a_{k-1}\right)}^2+{\left(x_k-x_{k-1}\right)}^2}+\sqrt{{\left( a_{k+1}-a_k\right)}^2+{\left(x_{k+1}-x_k\right)}^2}\right)$$ to functional $J_{\alpha }$.

\begin{figure}[h]
\centering
\resizebox*{8cm}{!}{\includegraphics{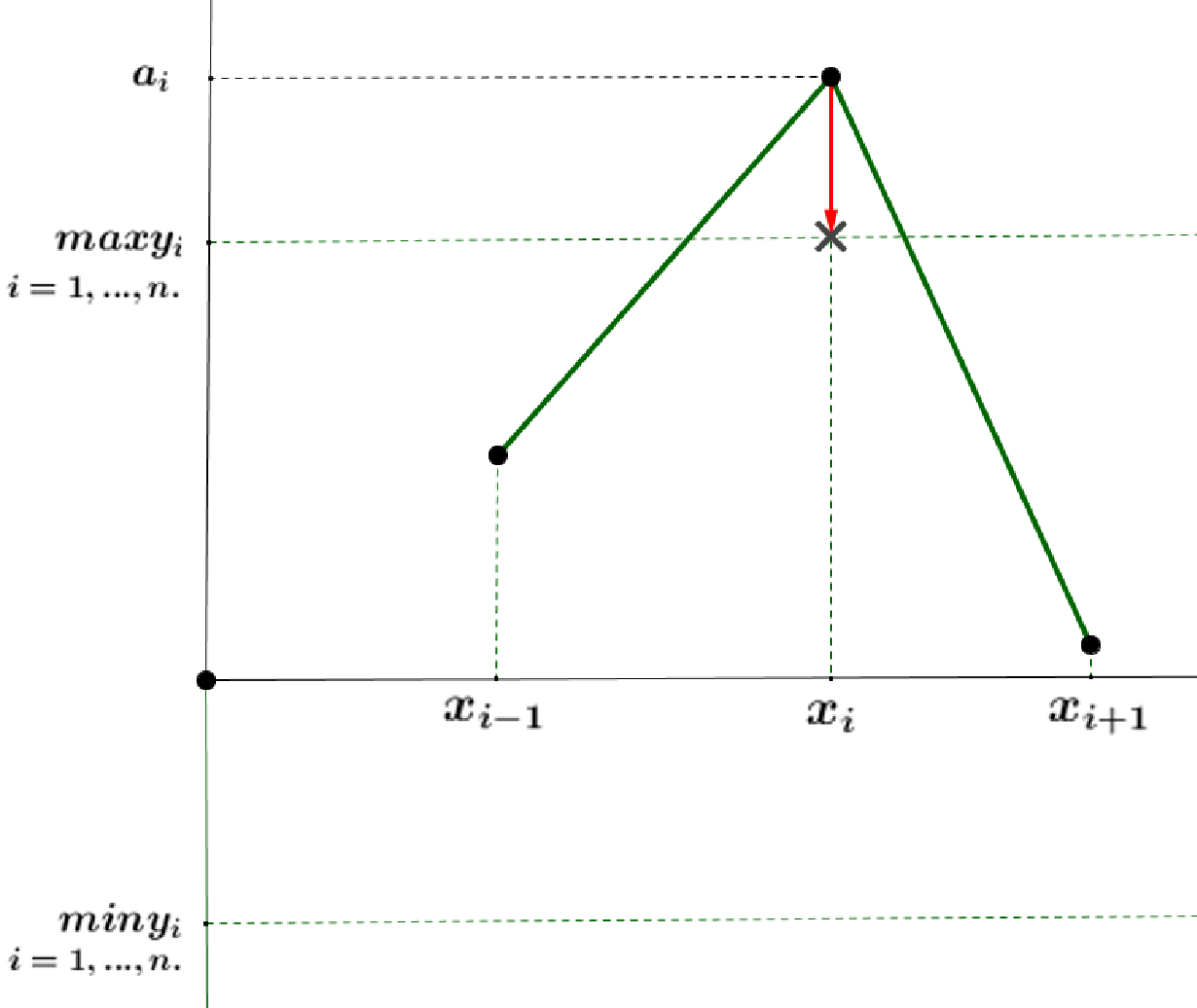}}\hspace{7pt}
\caption{Illustration to the proof of Lemma \ref{AM_BA_lem2}}\label{AM_BA_fig1_lem2}
\end{figure}

We have
$$a_k-a_{k-1}>\max_{i=1,\dots,n}y_i-a_{k-1}\geq 0$$
$$a_k-a_{k+1}>\max_{i=1,\dots,n}y_i-a_{k+1}\ge 0,\quad i=1,\dots ,n.$$ 
Therefore, replacement of $a_k$ with $\displaystyle\max_{i=1,\dots,n}y_i$ leads to a strict decrease of $J_{\alpha }.$

In addition,$$a_k-y_k>\max_{i=1,\dots,n}y_i-y_k\geq 0.$$
Therefore, replacing $a_k\ $ with $\max_{i=1,\dots,n}y_i$ also leads to a strict decrease (of the corresponding term from the sum of squared residuals) of $J_{\alpha }$.

Thus, replacing $a_k\ $ with $\displaystyle\max_{i=1,\dots,n}y_i$ we strictly reduce the value of the functional $J_{\alpha }$. Therefore, the original piecewise linear trajectory corresponding to the vector $a$ can not be the  optimal one, which is a contradiction.

The case $a_k<\displaystyle\min_{i=1,\dots,n}y_i$ can be studied similarly.
\end{proof}

Now we can proceed to the most important question here: what is the upper bound for $\alpha$ starting from which the solution of the problem differs from the average line in Lemma \ref{AM_BA_lem1} less than some small positive $\varepsilon$? 
\begin{figure}[h]
\centering
\resizebox*{8cm}{!}{\includegraphics{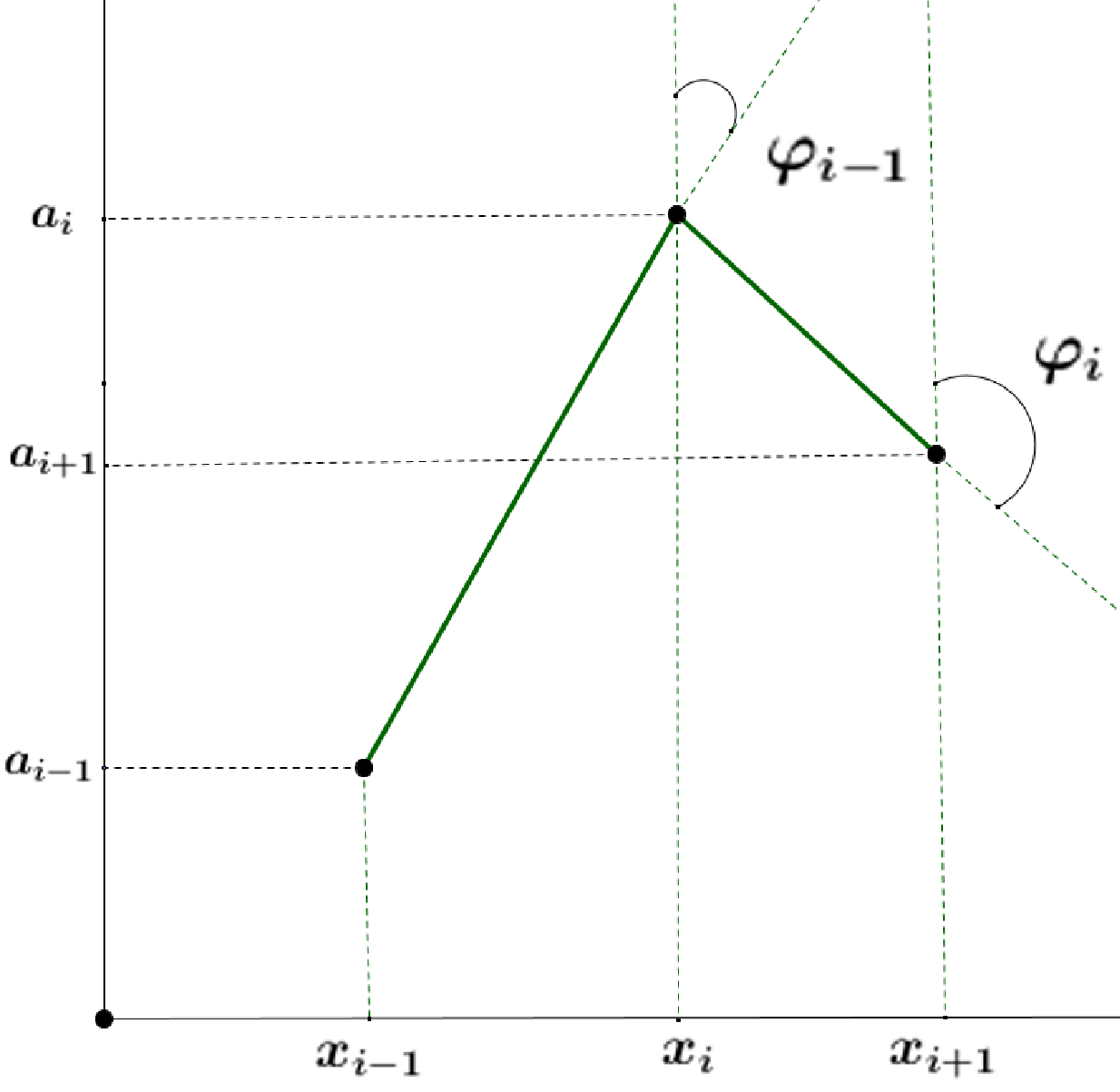}}\hspace{7pt}
\caption{The angles which characterize the slopes of the broken line links}\label{AM_BA_fig1}
\end{figure}

Let us rewrite the terms in square brackets in expression (\ref{AM_BA_Ja_1}) by introducing the angle ${\varphi }_i$, which is the angle of inclination of the $i$-th link of the broken line (i.e. a link that is a segment with ends at points $\left(x_{i},\ a_{i}\right)$ and $\left(x_{i+1},\ a_{i+1}\right)$) to the positive direction of the $Oy$ axis. 
The angle is measured so that the rotation from the broken line link to the $Oy$ axis be counterclockwise (see Fig. \ref{AM_BA_fig1}).

We have
\begin{equation}
\begin{cases}
\displaystyle\frac{{\partial J}_{\alpha }}{\partial a_1}&=2\left(a_1-y_1\right)-\alpha \cos{\varphi_1}=0,\\
\displaystyle\frac{{\partial J}_{\alpha }}{\partial a_i}&=2\left(a_i-y_i\right)+\alpha \left[\cos{\varphi_{i-1}}-\cos{\varphi_i}\right]=0,\quad \forall i=2,\dots,n-1,\\
\displaystyle\frac{{\partial J}_{\alpha }}{\partial a_n}&=2\left(a_n-y_n\right)+\alpha \cos{\varphi_n}=0.
\end{cases}
\end{equation}

So, $\varphi=(\varphi_1,\dots,\varphi_n)^T$ must be determined from the system
\begin{equation}\label{AM_BA_main_syst1}
\begin{cases}
\displaystyle\frac{2\left(a_1-y_1\right)}{\alpha }-{\cos{\varphi }_1\ }=0, \\ 
\cdots \\ 
\displaystyle\frac{2\left(a_i-y_i\right)}{\alpha }+{\cos{\varphi}_{i-1}\ }-{\cos{\varphi }_i\ }=0,\\ 
\cdots \\ 
\displaystyle\frac{2\left(a_n-y_n\right)}{\alpha }+{\cos{\varphi }_n\ }=0.
\end{cases}
\end{equation}

It must be noted that in what follows we are going to deal with maximum norm $$\|\phi\|_\infty=\max_{i=1,\dots,n}|\phi_i|.$$ Now we can proceed to the main result.

\begin{theorem} \label{AM_BA_main_th}
Let $\delta=(\delta_1,\dots,\delta_n)^T$ be a vector of angles of inclination of the broken line links to the axis $Ox$ and $\varepsilon$ be a positive number.
If the penalty coefficient satisfies the inequality
$$\alpha < \frac{\displaystyle\max_{i=1,\dots,n}y_i -\min_{i=1,\dots,n}y_i}{\varepsilon },$$
then $\phi_i=\frac{\pi}{2}+\delta_i,$ $i=1,\dots,n$ where
\begin{equation}\label{AM_BA_cor_cond_1}
\left\|\delta \right\|_{\infty}> \varepsilon,
\end{equation}
\end{theorem}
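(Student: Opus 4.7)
The plan is to prove the theorem by contrapositive: assuming $\|\delta\|_\infty\le\varepsilon$, I aim to derive $\alpha\ge(\max_i y_i-\min_i y_i)/\varepsilon$. The two levers available are the optimality system~(\ref{AM_BA_main_syst1}) and Lemma~\ref{AM_BA_lem2}: the former pins each $a_i$ quantitatively close to $y_i$, while the latter confines $a_i$ to the range of the $y$'s.

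The first step is to rewrite~(\ref{AM_BA_main_syst1}) in $\delta$-form. From $\varphi_i=\pi/2+\delta_i$ we get $\cos\varphi_i=-\sin\delta_i$, and solving each equation for $a_i-y_i$ yields
\[
a_1-y_1=-\tfrac{\alpha}{2}\sin\delta_1,\qquad a_i-y_i=\tfrac{\alpha}{2}(\sin\delta_{i-1}-\sin\delta_i)\ \ (2\le i\le n-1),\qquad a_n-y_n=\tfrac{\alpha}{2}\sin\delta_n.
\]
Since $|\sin\delta_i|\le|\delta_i|\le\varepsilon$, the triangle inequality yields the uniform bound $|a_i-y_i|\le\alpha\varepsilon$ for all $i$, with the sharper bound $|a_i-y_i|\le\alpha\varepsilon/2$ at the endpoints $i=1$ and $i=n$.

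The second step picks extremal indices $p$ and $q$ with $y_p=\max_i y_i$ and $y_q=\min_i y_i$. Lemma~\ref{AM_BA_lem2} gives $a_p\le y_p$ and $a_q\ge y_q$, so $y_p-a_p\ge 0$ and $a_q-y_q\ge 0$; combined with the bound just derived, this yields $a_p\ge\max_i y_i-\alpha\varepsilon$ and $a_q\le\min_i y_i+\alpha\varepsilon$, and therefore $a_p-a_q\ge(\max_i y_i-\min_i y_i)-2\alpha\varepsilon$.

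The closing step is where I expect the main difficulty: one must upgrade this lower bound on $a_p-a_q$ to the clean inequality $\alpha\varepsilon\ge\max_i y_i-\min_i y_i$. The natural route is to combine it with the telescoping identity $\sin\delta_k=\tfrac{2}{\alpha}\sum_{j=1}^k(y_j-a_j)$ (obtained by composing the KKT equations together with the conservation $\sum_{j=1}^n(y_j-a_j)=0$) and with the sharper endpoint bound $|a_i-y_i|\le\alpha\varepsilon/2$, so as to absorb the factor of $2$ sitting in front of $\alpha\varepsilon$. This sharpening, rather than the KKT rewriting or the appeal to Lemma~\ref{AM_BA_lem2}, is where the delicate work lies.
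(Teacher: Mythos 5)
Your first two steps are sound and in fact reproduce, in sharper explicit form, exactly what the paper extracts from its mean-value expansion of $F$ and the bound $\|F'\|_\infty\le 2$: namely that $\|\delta\|_\infty\le\varepsilon$ forces $\max_i|a_i-y_i|\le\alpha\varepsilon$ (your per-equation formulas $a_1-y_1=-\tfrac{\alpha}{2}\sin\delta_1$, etc., are correct). The genuine gap is the closing step, which you yourself flag but do not carry out, and the route you sketch cannot deliver the stated constant. The telescoping identity $\sin\delta_k=\tfrac{2}{\alpha}\sum_{j\le k}(y_j-a_j)$ is just the partial sums of the same stationarity equations; under $\|\delta\|_\infty\le\varepsilon$ it only gives $\bigl|\sum_{j\le k}(y_j-a_j)\bigr|\le\tfrac{\alpha}{2}\sin\varepsilon$, and the sharper endpoint bound helps only when the extremal indices $p,q$ happen to be $1$ or $n$. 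What you actually need to finish is an upper bound on $a_p-a_q$, and the only one available from $\|\delta\|_\infty\le\varepsilon$ is the slope bound $a_p-a_q\le(x_n-x_1)\tan\varepsilon$; combining it with your inequality gives $2\alpha\varepsilon\ge(\max_i y_i-\min_i y_i)-(x_n-x_1)\tan\varepsilon$, which is weaker than the target $\alpha\varepsilon\ge\max_i y_i-\min_i y_i$ both by the factor $2$ and by the additive geometric term. This loss is not an artifact of clumsiness: already for two data points with a large gap $x_2-x_1$ the minimizer (the functional is strictly convex, so the stationary point is global) has a small inclination angle for values of $\alpha$ strictly below $(\max_i y_i-\min_i y_i)/\varepsilon$, so no refinement of your estimates can absorb the factor of $2$ and the $(x_n-x_1)\tan\varepsilon$ term to reach the clean threshold.

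For comparison, the paper closes differently: it first obtains the implication with the data-dependent threshold $\max_i|a_i-y_i|/\varepsilon$ (equivalently, $\|c_\alpha(a,y)\|_\infty>2\varepsilon$ implies $\|\delta\|_\infty>\varepsilon$) and then replaces $\max_i|a_i-y_i|$ by $\max_i y_i-\min_i y_i$ via Lemma \ref{AM_BA_lem2}. Note, however, that Lemma \ref{AM_BA_lem2} bounds $\max_i|a_i-y_i|$ from \emph{above}, so this substitution enlarges the threshold and weakens the hypothesis rather than the conclusion --- it is precisely the same directional difficulty you were trying to repair with the telescoping argument. So your diagnosis of where the delicate point lies is accurate, but your proposal does not close it, and with the constant as stated it cannot be closed by the estimates you invoke; a correct completion along your lines yields only the weaker bound $\alpha\ge\bigl((\max_i y_i-\min_i y_i)-(x_n-x_1)\tan\varepsilon\bigr)/(2\varepsilon)$.
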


\begin{proof}
Let us introduce the the vector functions:
$$F(\varphi)={(-{\mathrm{cos} {\varphi }_1\ },\ {\mathrm{cos} {\varphi }_1\ }-{\mathrm{cos} {\varphi }_2\ },\ \dots ,\ {\mathrm{cos} {\varphi }_{n-2}\ }-{\mathrm{cos} {\varphi }_{n-1},\ {\mathrm{cos} {\varphi }_n\ })\ }}^{{T}},$$
$$c_\alpha(a,y)=\left(\frac{2\left(a_1-y_1\right)}{\alpha},\dots,\frac{2\left(a_n-y_n\right)}{\alpha}\right)^T$$
Denote $\omega ={(1,\dots ,1)}^{{T}}$. System \eqref{AM_BA_main_syst1} can be rewritten in a vector form as
$$F\left(\varphi\right)+c_\alpha(a,y)=0_n$$

Obviously, when the broken line degenerates into the average line we have 
$$\varphi=\frac{\pi }{2}\omega,$$ and $$F\left(\frac{\pi }{2}\omega \right)=0_n.$$

Consider the equation 
\begin{equation}\label{AM_BA_main_eq1}
F\left(\frac{\pi }{2}\omega +\delta \right)+c_\alpha(a,y)=0_n,
\end{equation}
which determines the behavior of the solution in the neighborhood of the average line.
Here $\delta\in\mathbb{R}^n$ is a vector of angle deviations. Let us expand $F$ in a neighborhood of the point $\delta=0_n$.
$$F\left(\frac{\pi }{2}\omega +\delta \right)=F\left(\frac{\pi }{2}\omega \right)+F'\left(\frac {\pi }{2}\omega +\theta \delta \right)\delta,$$
where $F'$ is the Jacobian of the function $F$, and $\theta \in\left[0,1\right].$

Thus, the equation \eqref{AM_BA_main_eq1} can be rewritten as:
\begin{equation*}\label{AM_BA_2}
F'\left(\frac{\pi }{2}\omega +\theta \delta \right)\delta +c_\alpha(a,y)=0,
\end{equation*}
whence
\begin{equation}\label{AM_BA_3}
\left\|c_\alpha(a,y) \right\|_{\infty}\leq\left\|{F'\left(\frac{\pi }{2}\omega +\theta \delta \right)}\right\|_{\infty}\left\|\delta\right\|_{\infty}.
\end{equation}

Let us take a closer look at Jacobian 
$$F'\left(\varphi \right)=
\begin{pmatrix}
\sin{\varphi_1} & 0 &0 & \cdots & 0&  0& 0  \\
-\sin{\varphi_1} & \sin{\varphi_2} &0 &  \cdots&0& 0 & 0\\
0 & -\sin{\varphi_2} & \sin{\varphi_3}  & \cdots&0& 0 & 0 \\
\vdots  & \vdots & \vdots & \ddots &  \vdots&\vdots& \vdots  \\
0 & 0& 0&\cdots& -\sin{\varphi_{n-2}} & \sin{\varphi_{n-1}} & 0 \\
0 & 0 & 0&\cdots&  0 & 0 & -\sin{\varphi_n}
\end{pmatrix}_{n\times n}.$$
As we see $F'(\varphi )$ is a lower bidiagonal matrix.

It is easy to check that $$\|F'(\varphi )\|_{\infty}\leq 2.$$ Therefore, from \eqref{AM_BA_3} we have that
the inequality $\|\delta\|_{\infty}\leq\varepsilon$ implies $$\|c_\alpha(a,y) \|\leq 2\varepsilon,$$ whence 
if \begin{equation}\label{AM_BA_4}
\|c_\alpha(a,y) \|_{\infty}> 2\varepsilon
\end{equation}
then $\|\delta\|_{\infty}>\varepsilon$.
But 
$$\|c_\alpha(a,y) \|_{\infty}=\frac{2\displaystyle\max_{i=1,\dots,n}|a_i-y_i|}{\alpha}$$

Therefore, \eqref{AM_BA_4} brings us to inequality
$$\alpha<\frac{\displaystyle\max_{i=1,\dots,n}|a_i-y_i|}{\varepsilon}.$$
 right side of which can be bounded from above by means of Lemma \ref{AM_BA_lem2}. Finally we get
$$\alpha <\overline{\alpha}= \frac{\displaystyle\max_{i=1,\dots,n}y_i -\min_{i=1,\dots,n}y_i}{\varepsilon }.$$ 
\end{proof}

It must be noted that inequality \eqref{AM_BA_cor_cond_1} means that the optimal broken line differs not less than $\varepsilon$ from the line in Lemma \ref{AM_BA_lem1}. Consequently, Theorem \ref{AM_BA_main_th} provides the upper bound for the $\alpha$ for which we can guarantee that the solution is not close to the trivial one.

\section{Results of numerical experiments}

\begin{example}\label{AM_BA_exmpl1}
Let us consider a set $X$ which consists of $12$ points:
\begin{equation*}
\begin{split}
X=\bigg[&\left(0,8.74970396\right); \left(\frac{1}{11},-20.44713479\right); \left(\frac{2}{11},-32.91463106\right); \left(\frac{3}{11},3.15417017\right),\\
&\left(\frac{4}{11},7.00412727\right); \left(\frac{5}{11},28.91840718\right); \left(\frac{6}{11},30.48287838\right); \left(\frac{7}{11},3.75438939\right);\\
&\left(\frac{8}{11},-0.06462388\right); \left(\frac{9}{11},0.46098988\right); \left(\frac{10}{11},-12.35515561\right); \left(1,-40.121391\right)\bigg].
\end{split}
\end{equation*}

Choose $$\varepsilon = 0.105.$$ Then according to Theorem \ref{AM_BA_main_th} the upper bound for penalty coefficient $\overline{\alpha}=672.4$. 

\begin{figure}[H]
\centering
\resizebox*{10cm}{!}{\includegraphics{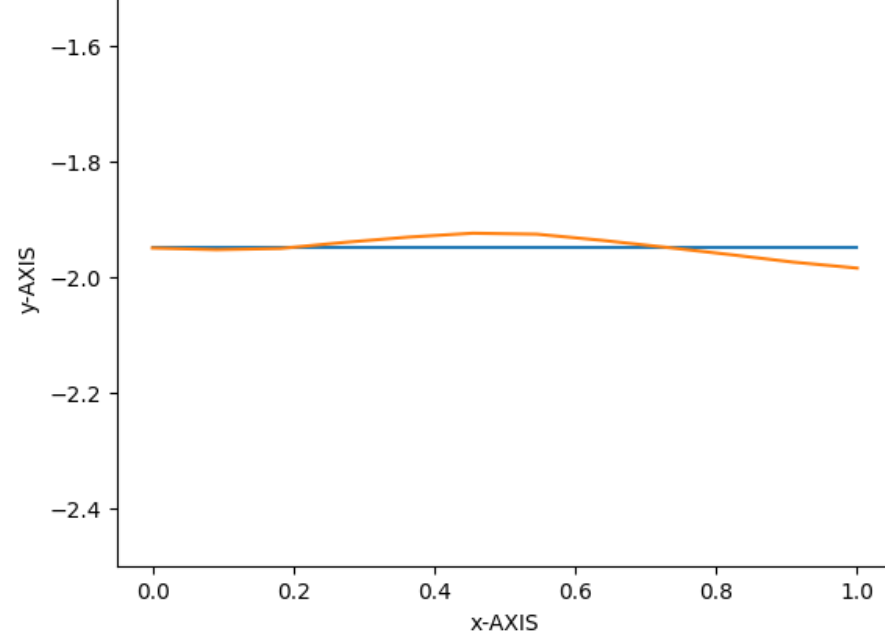}}\hspace{0pt}
\caption{The optimal broken line (orange) and the averege line (blue) in Example \ref{AM_BA_exmpl1}}
\label{AM_BA_exmpl1_fig}
\end{figure}

Functional $J_{\overline{\alpha}}$ attains the minimum on the broken line (see Fig. \ref{AM_BA_exmpl1_fig}) which is defined by vector of ordinates of its vertices $a$ 
\begin{equation*}
\begin{split}
a=(&-1.94981404, -1.95270849, -1.95060026, -1.9400497,\\ 
&-1.93090088, -1.92419569, -1.92584876, -1.93633343,\\
& -1.94839252, -1.96097549, -1.97423314, -1.98461195)^T.
\end{split}
\end{equation*}

\end{example}

\begin{example}\label{AM_BA_exmpl2}
Let us consider a set $X$ which consists of $12$ points:
\begin{equation*}
\begin{split}
X=\bigg[&\left(0,2\right); \left(\frac{1}{11},4\right); \left(\frac{2}{11},8\right); \left(\frac{3}{11},1\right),\left(\frac{4}{11},0.5\right); \left(\frac{5}{11},0\right); \left(\frac{6}{11},25\right); \\
&\left(\frac{7}{11},10\right); \left(\frac{8}{11},0\right); \left(\frac{9}{11},125\right); \left(\frac{10}{11},14\right); \left(1,12\right)\bigg].
\end{split}
\end{equation*}

Choose $$\varepsilon =0.1.$$ Then according to Theorem \ref{AM_BA_main_th} the upper bound for penalty coefficient $\overline{\alpha}=1250$.

\begin{figure}[h]
\begin{minipage}[h]{0.49\linewidth}
\center{\includegraphics[width=1\linewidth]{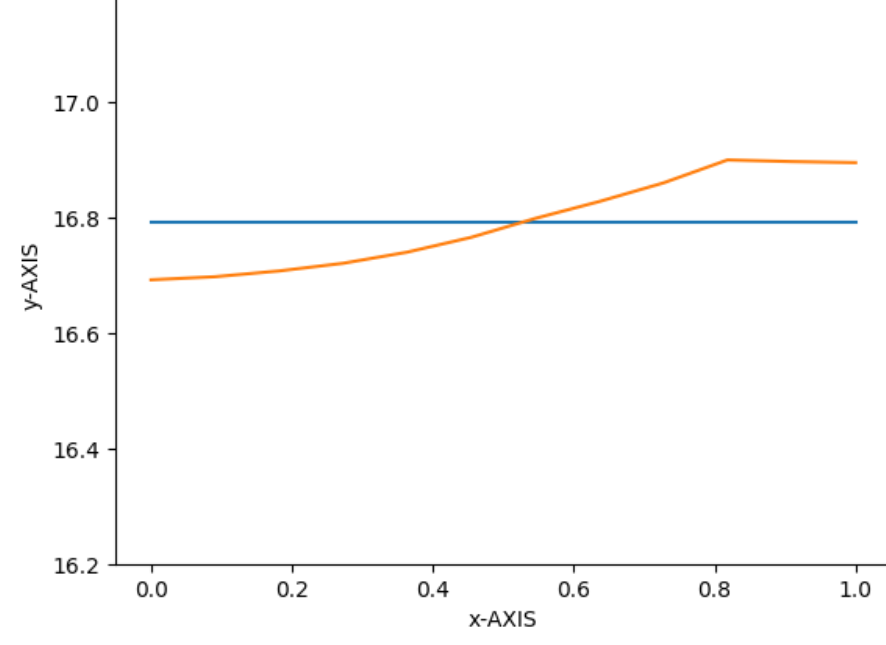}} \\ a) 
\end{minipage}
\hfill
\begin{minipage}[h]{0.49\linewidth}
\center{\includegraphics[width=1\linewidth]{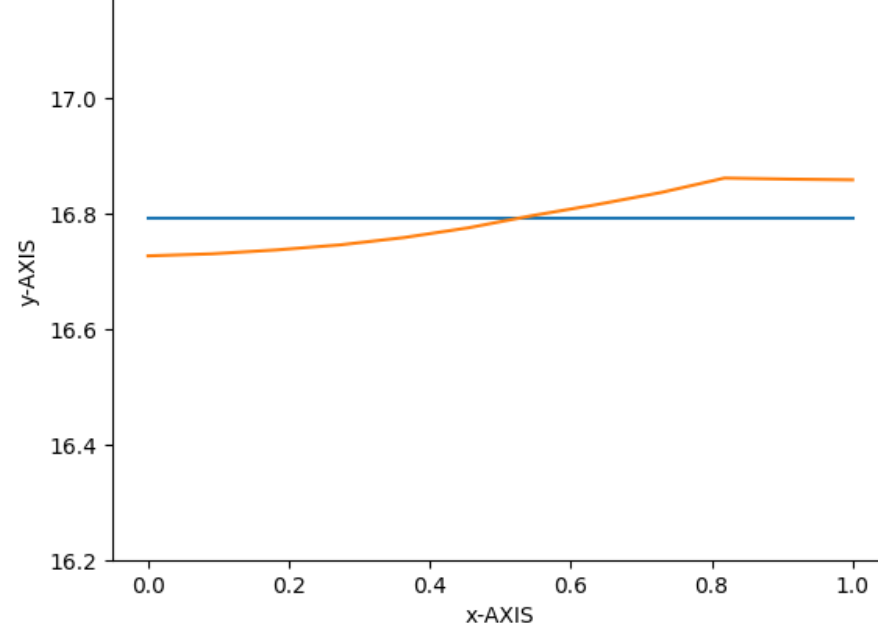}} \\ b)
\end{minipage}
\vfill
\begin{minipage}[h]{0.49\linewidth}
\center{\includegraphics[width=1\linewidth]{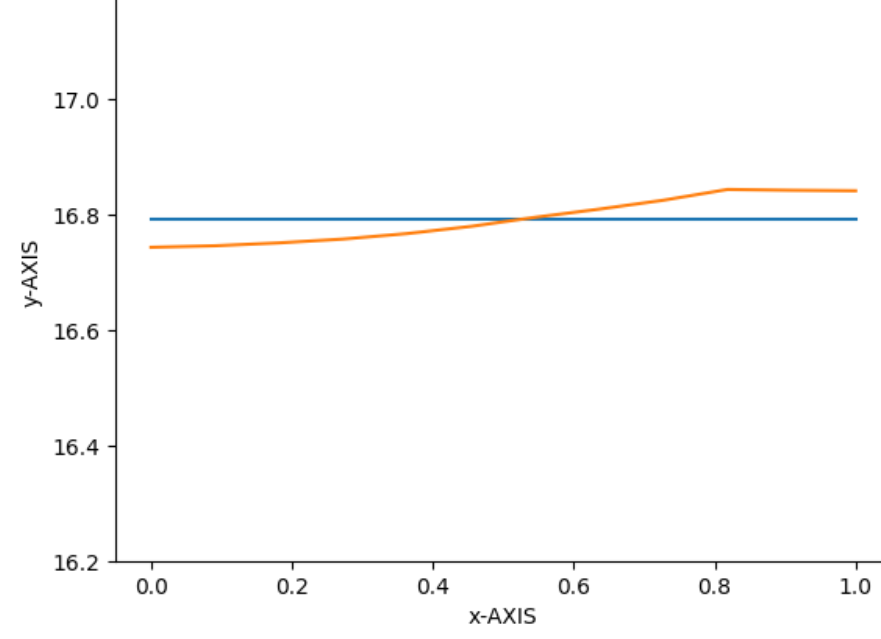}} \\ c)
\end{minipage}
\hfill
\begin{minipage}[h]{0.49\linewidth}
\center{\includegraphics[width=1\linewidth]{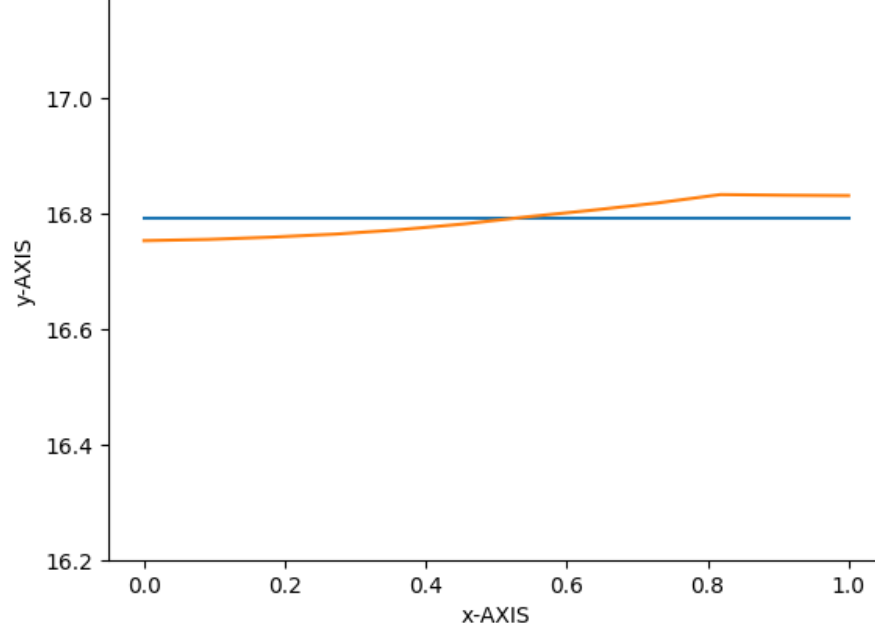}} \\ d)
\end{minipage}
\caption{The optimal broken line (orange) with $\overline{\alpha}$ values 500 (Fig. a), 750 (Fig. b), 1000 (Fig. c), 1250 (Fig. d) and the average line (blue) in Example \ref{AM_BA_exmpl2}}
\label{AM_BA_exmpl1_fig2}
\end{figure} 

\newpage

Fig.\ref{AM_BA_exmpl1_fig2} clearly shows that as $\overline{\alpha}$ increases, the deviation of the curve obtained using the LSM (least squares method) with proposed regularization and the straight line, that passes through the average value of $Y$, decreases.

\end{example}

\begin{example}\label{AM_BA_exmpl3}

Let us consider a set $X$ which consists of $12$ points:
\begin{equation*}
\begin{split}
X=\bigg[&\left(0,0\right); \left(1,20\right); \left(2,15\right); \left(3,5\right),\left(4,12\right); \left(5,5\right); \left(6,6\right); \\
&\left(7,5\right); \left(8,5\right); \left(9,7\right); \left(10,19\right); \left(11,2\right)\bigg].
\end{split}
\end{equation*}

In this example we will compare proposed approach of regularization with its Ridge and Lasso competitors. In the case of Ridge regression with piecewise linear approximation the squares of the angular coefficient in each section $\left[x_{i-1};\ x_i\right]$ are summed as a penalty function.When using Lasso regression, the sum of the modules of the angular coefficients is considered as a penalty function.

Here and later on we use iteratively reweighted least squares (IRLS) \cite{AM_BA_IRLS, AM_BA_IRLS1, AM_BA_IRLS2, AM_BA_IRLS3} method to minimize the function obtained by means of Lasso regression.

\begin{figure}[h]
\begin{minipage}[h]{0.49\linewidth}
\center{\includegraphics[width=1\linewidth]{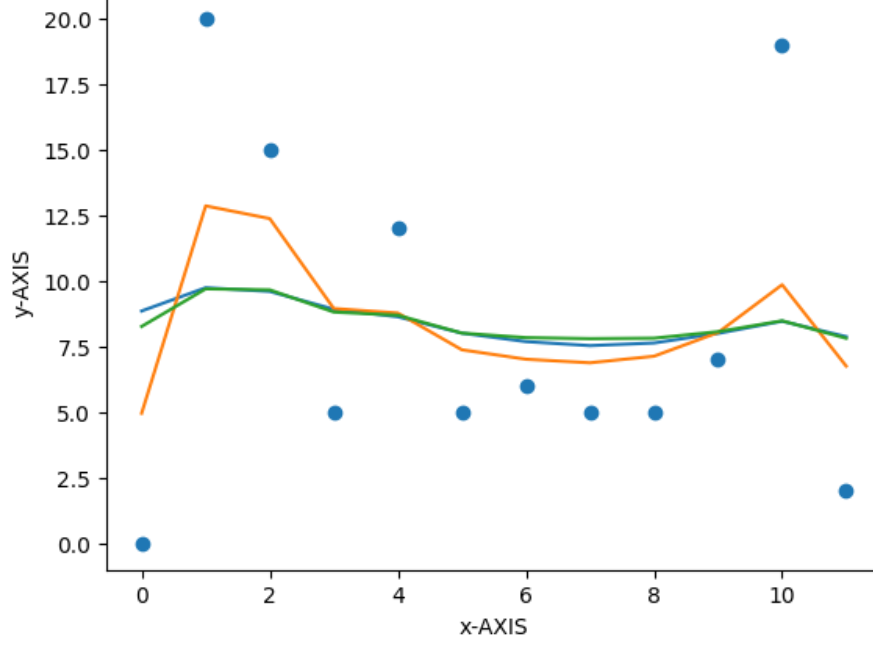}} \\ a)
\end{minipage}
\hfill
\begin{minipage}[h]{0.49\linewidth}
\center{\includegraphics[width=1\linewidth]{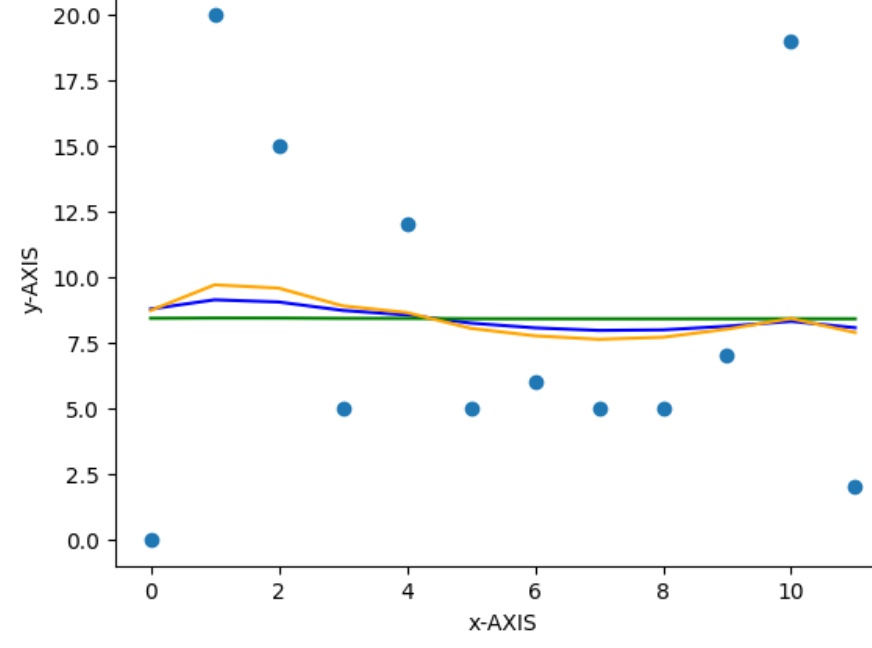}} \\ b)
\end{minipage}
\vfill
\begin{minipage}[h]{0.49\linewidth}
\center{\includegraphics[width=1\linewidth]{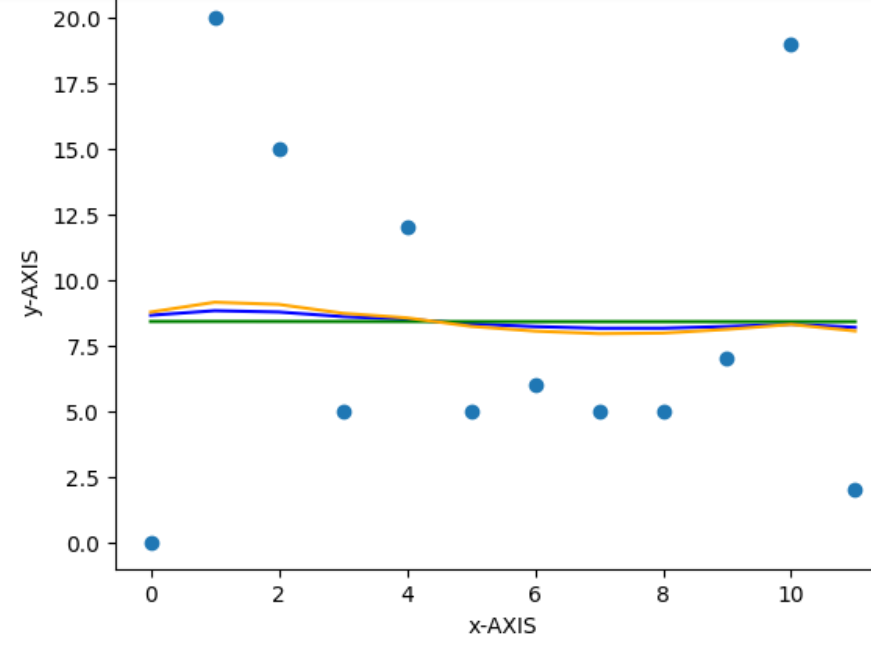}} \\ c)
\end{minipage}
\hfill
\begin{minipage}[h]{0.49\linewidth}
\center{\includegraphics[width=1\linewidth]{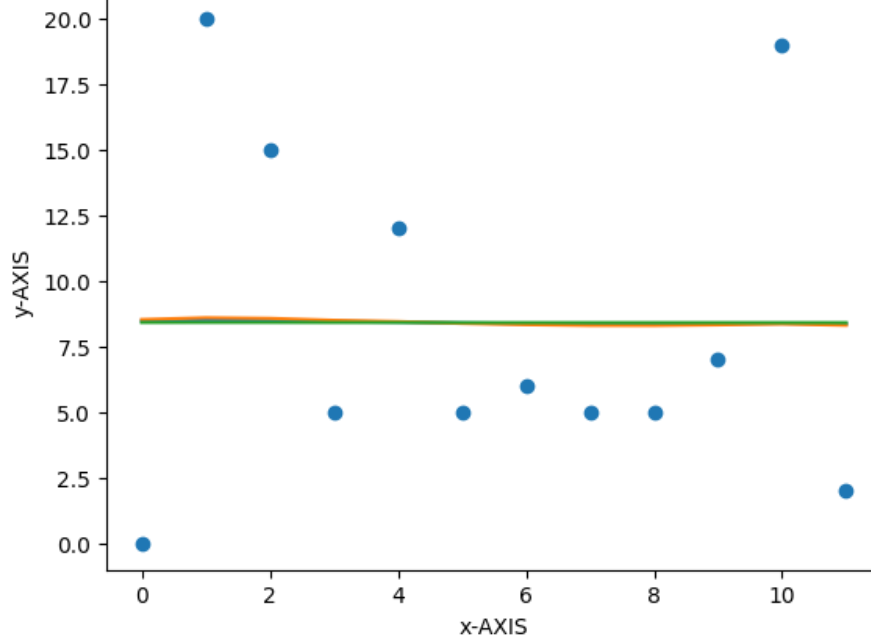}} \\ d)
\end{minipage}
\caption{The optimal broken line constructed by the proposed modification (orange), Ridge regression (blue), Lasso regression (green) at $\overline{\alpha}$ values $10$ (Fig. a), $25$ (Fig. b), $50$ (Fig. c), $250$ (Fig. d) in Example \ref{AM_BA_exmpl3}}
\label{AM_BA_exmpl1_fig3}
\end{figure} 

Fig.\ref{AM_BA_exmpl1_fig3} shows that the LSM (least squares method) with proposed regularization differs from Ridge and Lasso regression.

\end{example}

\begin{example}\label{AM_BA_exmpl4}
Let us consider a set $X$ which consists of $12$ points:
\begin{equation*}
\begin{split}
X=\bigg[&\left(0,-32\right); \left(1,2048\right); \left(2,0\right); \left(3,256\right),\left(4,-128\right); \left(5,16\right); \left(6,-4\right); \\
&\left(7,2\right); \left(8,128\right); \left(9,-64\right); \left(10,-32\right); \left(11,8\right)\bigg].
\end{split}
\end{equation*}

Choose $\varepsilon =0.05.$ Then according to Theorem \ref{AM_BA_main_th} the upper bound for penalty coefficient $\overline{\alpha}=43520$. 

\begin{figure}[h]
\begin{minipage}[h]{0.49\linewidth}
\center{\includegraphics[width=1\linewidth]{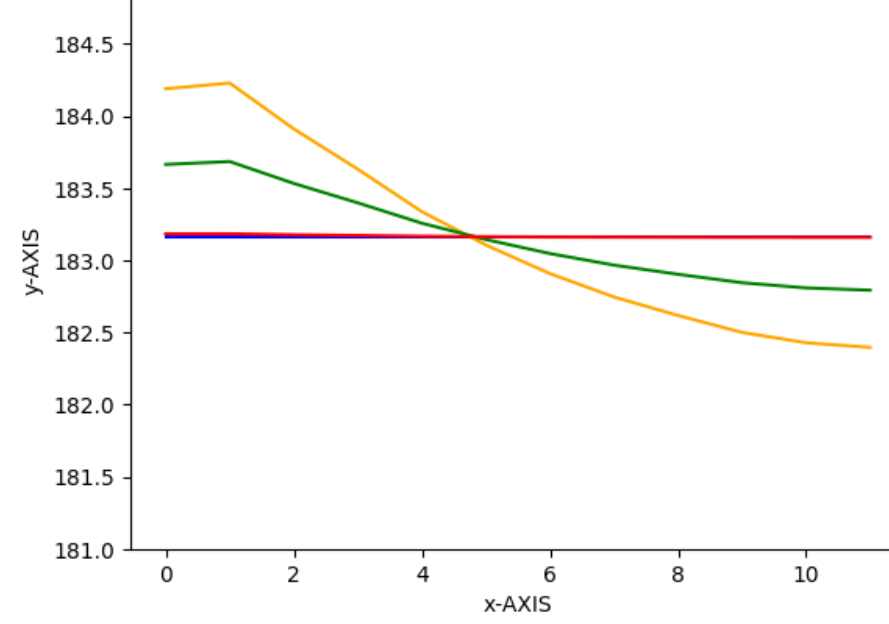}} \\ a) 
\end{minipage}
\hfill
\begin{minipage}[h]{0.49\linewidth}
\center{\includegraphics[width=1\linewidth]{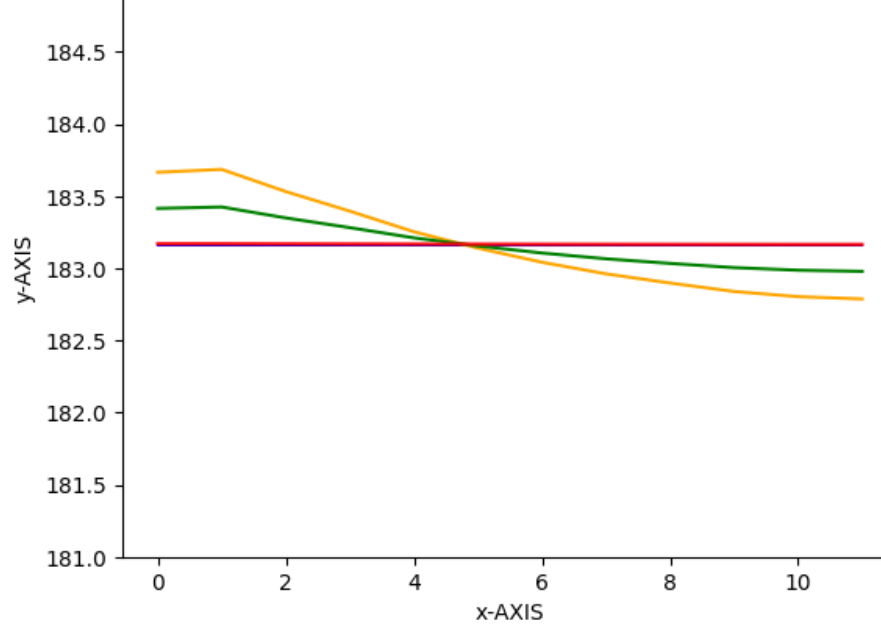}} \\ b)
\end{minipage}
\vfill
\begin{minipage}[h]{0.49\linewidth}
\center{\includegraphics[width=1\linewidth]{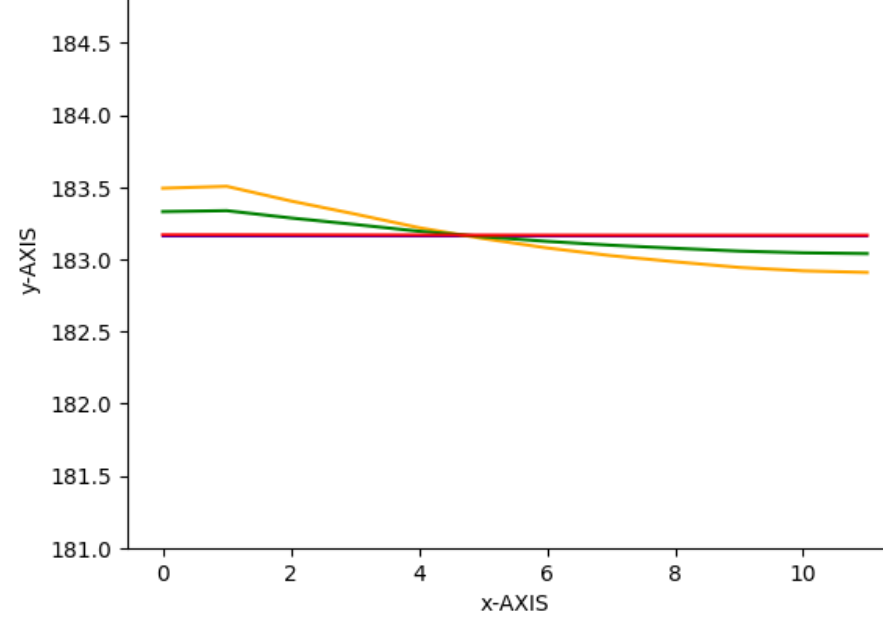}} \\ c)
\end{minipage}
\hfill
\begin{minipage}[h]{0.49\linewidth}
\center{\includegraphics[width=1\linewidth]{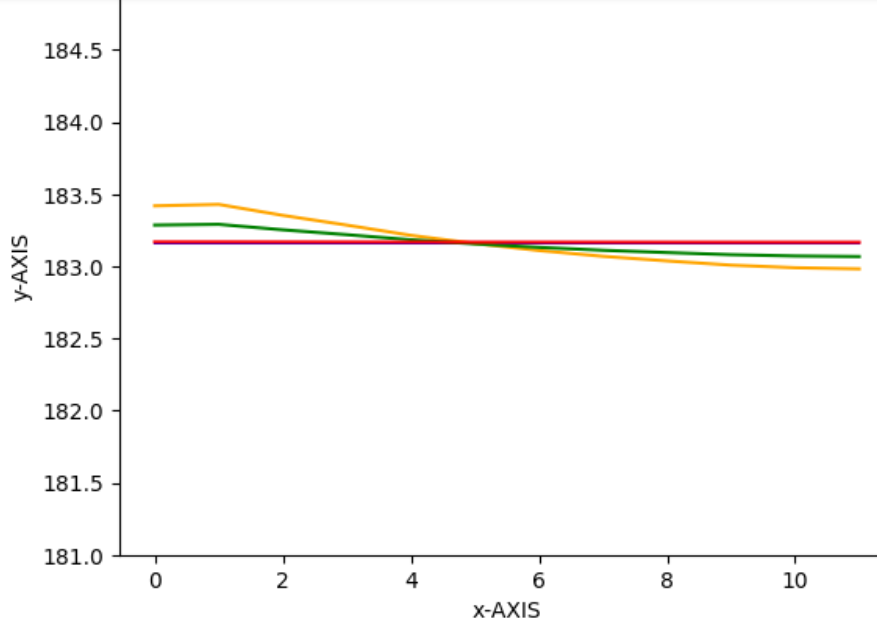}} \\ d)
\end{minipage}
\caption{The optimal broken line constructed by the proposed modification (orange), Ridge regression (green), Lasso regression (red) with $\overline{\alpha}$ values $\overline{\alpha}=10840$ (Fig. a), $\overline{\alpha}=21760$ (Fig. b), $\overline{\alpha}=32640$ (Fig. c), $\overline{\alpha}=43520$ (Fig. d) and the average line (blue) in Example \ref{AM_BA_exmpl4}}
\label{AM_BA_exmpl1_fig4}
\end{figure} 

Fig.\ref{AM_BA_exmpl1_fig4} shows that the LSM (least squares method) with proposed regularization converges more slowly to a straight line than Lasso and Ridge regression.

\end{example}

\section{Conclusion}
We proposed the new method of regularization for the problem of approximation of 2D set of points. The main idea behind the approach is the control of the length of the fitting curve. We demonstrated that the optimal curve is in the class of piecewise linear functions. We proved the theorem which provides the upper bound for the penalty coefficient for which the problem still makes sense, i.e. it can be guaranteed that the optimal solution differs from the trivial average line. Numerical examples demonstrate that results produced by our technique differs from the ones of Lasso and Ridge regularization.

\end{document}